\documentclass{article}

	\usepackage{amsmath}
	\usepackage{amssymb}
	\usepackage{amsthm}
	\usepackage[colorlinks=true, urlcolor=blue]{hyperref}
	
	\newtheorem{proposition}{Proposition}[section]
	\newtheorem{lemma}[proposition]{Lemma}
	\newtheorem{definition}[proposition]{Definition}
	\newtheorem{remark}[proposition]{Remark}
	\newtheorem{example}[proposition]{Example}

	\newcommand{\rk}{\operatorname{rk}}
	\newcommand{\Sy}{\operatorname{Sym}}
	\newcommand{\Ker}{\operatorname{Ker}}
	\newcommand{\Ork}{\operatorname{Ork}}
	
	\newcommand{\Ps}{\mathbb{P}}
	
	\newcommand{\Span}[1]{\left\langle\,#1\,\right\rangle}
	\newcommand{\blen}{\operatorname{b\ell}}
	\newcommand{\crk}{\operatorname{crk}}
	\newcommand{\al}{\operatorname{al}}
	
\begin{document}
\title{Generic Power Sum Decompositions\\ and Bounds for the Waring Rank}
\author{Edoardo Ballico\\Dipartimento di Matematica, Universit\`a di Trento, Povo (Italy)\\\\Alessandro De Paris\\Dipartimento di Matematica e Applicazioni ``Renato Caccioppoli'',\\Universit\`a degli Studi di Napoli Fe\-de\-ri\-co II (Italy), deparis@unina.it\\\\Accepted version of an article published in\\ Discrete Comput. Geom. (2017), 57(4), 896--914,\\ DOI: 10.1007/s00454-017-9886-7.\\ \href{http://link.springer.com/article/10.1007/s00454-017-9886-7}{The final publication is available at link.springer.com}.}
\date{}
\maketitle

\begin{abstract}
A notion of \emph{open rank}, related with generic power sum decompositions of forms, has recently been introduced in the literature. The main result here is that the maximum open rank for plane quartics is eight. In particular, this gives the first example of $n,d$, such that the maximum open rank for degree $d$ forms that essentially depend on $n$ variables is strictly greater than the maximum rank. On one hand, the result allows to improve the previously known bounds on open rank, but on the other hand indicates that such bounds are likely quite relaxed. Nevertheless, some of the preparatory results are of independent interest, and still may provide useful information in connection with the problem of finding the maximum rank for the set of all forms of given degree and number of variables. For instance, we get that every ternary forms of degree $d\ge 3$ can be annihilated by the product of $d-1$ pairwise independent linear forms.\\
Keywords: Power sum , Waring rank , tensor rank , symmetric tensor.\\
MSC 2010: 15A21, 15A69, 15A72, 14A25, 14N05, 14N15.\\
\end{abstract}

\section*{Acknowledgements}
Some improvements are due to an anonymous referee, in particular Example~\ref{SE} and a simplification in the proof of Lemma~\ref{Distinti}.

Financial support by MIUR (IT), GNSAGA of INdAM (IT) and Universit\`a degli Studi di Napoli Federico II (IT).

\section{Introduction}

Part of the considerable amount of work that the scientific community is devoting to recently emerged aspects of tensor theory addresses the polynomial Waring problem (see \cite{L}). If in a power sum decomposition
\begin{equation}\label{Wd}
f={l_1}^d+\cdots +{l_r}^d\;
\end{equation}
of a degree $d$ homogeneous polynomial, the number $r$ of summands is the minimum possible, then $r=:\rk f$ is the \emph{(Waring) rank} of $f$, and \eqref{Wd} is often called a \emph{Waring decomposition}. When $f$ can be regarded as a symmetric tensor (in particular, when the coefficients are in a field of characteristic zero), the Waring rank becomes the \emph{symmetric rank} (\footnote{Sometimes the term ``Waring decomposition'' has been used to indicate simply a power sum decomposition, without the minimality hypothesis. We also mention that the symmetric rank is sometimes called \emph{polar rank}: see \cite{P}.}). Perhaps, in its broadest sense, the polynomial Waring problem consists of finding the rank of specified polynomials (see \cite[Introduction]{CCG}). In a restricted sense, and in analogy with the number-theoretic Waring problem, one wonders about the maximal rank of homogeneous polynomials of fixed degree and number of variables (see \cite{G}). The solution to the main (`generic') version of such a problem, given in \cite{AH}, is now a classical result. But the maximum rank $\rk(n,d)$ for \emph{all} homogeneous polynomials of degree $d$ and number $n$ of variables, at the time of writing, is known only when $n\le 2$, when $d\le 2$, and in a few special cases: for $n=3$ we have $\rk(3,3)=5$ (see \cite[Chap.~2]{K} or \cite{LT}), $\rk(3,4)=7$ (see \cite[Chap.~3]{K} or \cite{D}) and $\rk(3,5)=10$ (see \cite{BuT} and \cite{D2}); see \cite{BuT} for more details.

To improve our knowledge on $\rk(n,d)$, we may exploit outcomes of careful investigations on polynomials of low dimension and degree (such as those in \cite{K}, \cite{LT}, \cite{BGI}, \cite{D}, \cite{D2}), or on some classes of polynomials of special interest (as done in~\cite{CCG}). One may also look for general bounds, as in \cite{J}, which improves a bound given earlier in~\cite{BBS}, \cite{BBS2}, by means of a modified version of the inductive procedure involved. Since the rank is not well-behaved in view of the inductive steps in~\cite{BBS} and~\cite{J}, the authors of the mentioned papers introduce auxiliary notions of rank, based on the minimization over decompositions that are `sufficiently generic' in some sense. They also need to provisionally focus on forms that \emph{essentially depend} on $n$ variables. For more details, see \cite[Introduction]{J}. In particular, in \cite[Def.~2]{J}, the \emph{open (Waring) rank} $\Ork(F)$ of a form $F$, and the maximum open rank $\Ork(n,d)$ for degree $d$ forms that essentially depend on $n$ variables, are introduced. With these notions, one can give estimates on $\Ork(n,d)$ by induction, and use the obvious inequality $\rk(n,d)\le\Ork(n,d)$ to get estimates on the Waring rank. Moreover, at the time of publishing of \cite{J}, no pairs $(n,d)$ with $\rk(n,d)\ne\Ork(n,d)$ were known, so that the general validity of the equality $\rk(n,d)=\Ork(n,d)$ could not have been excluded.

Successively, the bound given in \cite[Corollary~6]{J} has been drastically improved by \cite[Corollary~9]{BT}, by means of a simple argument. The improvement also implies that, in the hypothesis that $\rk(n,d)=\Ork(n,d)$ holds in general, then the inductive bound given in \cite[Theorem~4]{J} can not be sharp (this is less surprising, because the improvement given by \cite[Corollary~6]{J} with respect to the earlier results in \cite{BBS} is precisely based on the failing of sharpness of \cite[Theorem~4]{J} for $(n,d)=(3,3)$).

In spite of such weakness, we believe that investigations based on the open rank still are of use. To say the least, they convey attention on some interesting aspects of tensor rank theory. Furthermore, open rank is about generic power sum decompositions, that are of their own interest (the fact that they are well-behaved with respect to induction on $n,d$ gives an indication). Note also that \cite[Corollary~6]{J} does not rely on Alexander-Hirschowitz theorem, so that the overall complexity of its proof is actually much smaller than that of \cite[Corollary~9]{BT}.

In this paper we show that that $\Ork(3,4)=8$. Hence, we get an example where $\Ork(n,d)\ne\rk(n,d)$ (for this part of the story the inequality $\Ork(3,4)\ge 8$ suffices, and it is a consequence of Example~\ref{Esempio}). The inequality $\Ork(3,4)\le 8$ is worked out in Section~\ref{Tq}, and requires considerably more work, but we believe that has some interest as well. For instance, note that the inductive procedure based on degree $3$ gives a bound of $9$ in degree $4$. We also mention that a condition considered while the present article was in preparation, has successively played a nontrivial role in \cite{D2}, where the sharp upper bound $\rk(3,5)\le 10$ has been found. We also believe that Proposition~\ref{Rette} (a development of \cite[Proposition~4.1]{D}) deserves some interest.

\section{Preparation}

All vector spaces will be understood over a fixed algebraically closed field $\mathbb{K}$. For simplicity of exposition, we also assume $\operatorname{char }\mathbb{K}=0$ (though the results hold under more general hypotheses; e.g., when dealing with a degree $d$ form, it might often be assumed $\operatorname{char }\mathbb{K}>d$, as in \cite{BBS}). A projective space $\Ps V$ is understood as the set of all one-dimensional subspaces $\Span{v}$, $v\ne 0$, of the vector space $V$. When the scheme structure is needed, $\Ps V$ has to be replaced by $\operatorname{Proj}\Sy^\bullet V^\ast$.

Throughout the paper $S_\bullet=\Sy^\bullet S_1$, $S^\bullet=\Sy^\bullet S^1$ will denote standard graded rings, dually paired by \emph{apolarity}, that is a perfect pairing naturally induced by a fixed perfect pairing between $S^1$ and $S_1$ (see \cite[Introduction]{D}). Apolarity may also be viewed as a particular case of tensor contraction and, conversely, contraction of forms in $S_\bullet$ by forms in $S^\bullet$ can be defined in terms of apolarity (see again \cite[Introduction]{D} for more details). When dual bases
\[
x_0,\ldots ,x_n\in S_1\;,\qquad x^0,\ldots ,x^n\in S^1
\]
are fixed, the contraction of $f\in S_\bullet$ by $x^i$ is simply the partial derivation with respect to $x_i$. By this reason, for all $p\in S^\bullet$ we denote by
\[
\partial_p:S_\bullet\to S_\bullet\;,
\]
the contraction by $p$ operator. We also have that $p\mapsto\partial_p$ is a linear operation and $\partial_{pq}=\partial_p\circ\partial_q$. This allows to identify $S^\bullet$ with the ring of constant coefficients linear differential operators on $S_\bullet$ (apolarity is often directly defined by means of this property). It is convenient to explicitly mention that for a \emph{linear} form $t\in S^1$ and all $f,g\in S_\bullet$, we have
\[
\partial_t(fg)=(\partial_tf)g+f\partial_tg\;.
\]

The sign $\perp$ will refer to orthogonality with respect to the perfect pairing \emph{in fixed degree}; we shall not use it to denote apolar ideals. The \emph{partial polarization map} $f_{\delta,d}:S^\delta\to S_d$ of $f\in S_{d+\delta}$, is given by $f_{\delta,d}(t):=\partial_tf$.

The (Waring) rank of $f\in S_d$, denoted by $\rk f$, is the minimum of all nonnegative integers $r$ for which there exists a decomposition $f={v_1}^d+\cdots +{v_r}^d$ with $\Span{v_1},\ldots ,\Span{v_r}\in\Ps S_1$. This can also be regarded as a particular instance of a more general notion of rank of a point with respect to an arbitrary variety in a projective space (see \cite[5.2.1]{L}). Let us also rephrase below the definition of open rank given in \cite[Definition~2]{J}.

\begin{definition}
The \emph{open (Waring) rank} of $f\in S_d$, denoted by $\Ork f$, is the minimum of all nonnegative integers $r$ with the following property: for every Zariski closed, proper subset $X\subsetneq\Ps S_1$, there exists a decomposition
\[
f={v_1}^d+\cdots +{v_r}^d
\]
with $\Span{v_1},\ldots ,\Span{v_r}\in\Ps S_1\setminus X$. The minimum $r$ for a fixed $X$ is denoted by $\Ork(f,X)$ (in particular, $\rk f=\Ork(f,\emptyset)$).
\end{definition}

Loosely speaking, the open rank of $f$ is the least number of summands for which $f$ admits a generic power sum decomposition.

\section{Open rank in comparison with rank}

An obvious relationship between rank and open rank is $\rk f\le\Ork f$. Moreover, in the many cases where $f$ admits an essentially unique Waring decomposition, we have $\rk f<\Ork f$.

On one hand, open rank may have its own interest, at least from a theoretical viewpoint. Even from the applicative viewpoint, since Waring decompositions are related with tensor decompositions (which have many applications: see \cite{L}), it can not be excluded that for some purposes one might want to exclude decompositions of some special kind. On the other hand, since $\rk f\ne\Ork f$ in many cases, one would not expect that the open rank can give information on rank. But, as a matter of facts, to some extent it can, as shown by a simple result about binary forms we are going to state. 

Let us first recall that when $\dim S_1=2$, the \emph{length} of $f\in S_d$, which we denote by $\ell(f)$, is the least of all $s$ for which there exists a nonzero $l\in S^s$ such that $\partial_lf=0$. In other words, it is the initial degree of the \emph{apolar ideal} of $f$, $I_f:=\left\{x\in S^\bullet:\partial_xf=0\right\}$ (see \cite[Def.~1.32 and Lemma~1.33]{IK}). The notion of length of a binary form can be generalized in various ways for forms in more indeterminates: see \cite[Def.~5.66]{IK}. Nowadays, terms related to length are replaced by similar terms related with rank, probably because of the renewed interest in the interplay with the rank of tensors. For instance, according to \cite[Theorem 1.44]{IK}, for binary forms the length coincide with the border rank.

\begin{proposition}\label{SOr}
Assume $\dim S_1=2$. For all nonzero $f\in S_d$ we have
\[
\Ork f=d+2-\ell(f)\;.
\]
\end{proposition}
\begin{proof}
We have $2 \le 2b \le d+2$, with $b:=\ell(f)$. Let $P:=\Span f\in\Ps S_d$ and $C$ be the rational normal curve given by $d$-th powers in $\Ps S_d$. For any integer $k$, let $\mathcal {Z}(P,k)$ be the set of all degree $k$ zero-dimensional schemes $W\subset C$ such that $P\in \langle W\rangle$ (scheme-theoretic, projective span). The set $\mathcal {Z}(P,k)$ is naturally identified with the projective space associated to the degree $k$ component of the ideal $I_f$, apolar to $f$. The Artin graded algebra $A_f = S^\bullet/I_f$ is a complete intersection, with $I_f$ generated by a form of degree $b$ and a form of degree $d+2-b$ (\cite[Theorem 1.44]{IK}). $\mathcal{Z}(P,k) =\emptyset$ if $k<b$. If $2b=d+2$, then $\rk_CP =b=d+2-b$ and hence $\Ork f\ge b=d+2-b$.

Now assume $2b \ne d+2$. In this case $\mathcal {Z}(P,b)$ has a unique element, $Z$,
and if $b < k < d+2-b$, then each element of $\mathcal {Z}(P,k)$ is the union of $Z$ and a scheme $E\subset C$ of degree $k-b$. Hence $\Ork f \ge d+2-b$ in this case, too.

To prove the opposite inequality $\Ork f \le d+2-b$, it suffices to prove that the linear system of divisors on $\Ps S_1$ given by $\left(I_f\right)_{d+2-b}$ has no base points. But this immediately follows from the fact that $I_f$ is the ideal of a complete intersection, generated by a form of degree $b$ and a form of degree $d+2-b\ge b$.\hfill $\square$
\end{proof}

\begin{remark}\label{SOrg}\rm
Every rational normal curve in a $d$-dimensional projective space $\Ps^d$ corresponds to the curve given by $d$-th powers in $\Ps S_d$ through some isomorphism of projective spaces. Hence Proposition~\ref{SOr} holds as well if we consider a point $P\in\Ps^d$ in place of $f$, its open rank with respect to a rational normal curve $\gamma\subseteq\Ps^d$ and its border rank with respect to $\gamma$ in place of length (taking into account \cite[Theorem 1.44]{IK}).
\end{remark}

From the Comas-Seiguer theorem (see \cite[Theorem~11]{CS} or \cite[Theorem~9.2.2.1]{L}) immediately follows that $\Ork{f}=\rk f$ for all binary forms $f$ with rank higher than the generic. This fact indicates that the open rank may help to solve the polynomial Waring's problem that asks for the maximum rank for the set of \emph{all} forms of given degree and number of variables. Pursuing this indication, let us consider the maximum rank and the maximum open rank for degree $d$ binary forms: they are, respectively, $d$ and $d+1$. Note also that $\Ork f=d+1$ only for $d$-th powers of linear forms, which can be regarded as forms in only one variable. Hence the maximum rank coincide with the maximum open rank on the set of all \emph{essentially} binary forms. Moreover, Jelisiejew showed in \cite{J} that one can bound the maximum open rank for forms of degree $d$ that essentially depend on $n$ variables, basically following the induction procedure on $n,d$ introduced by Bia\l ynicki-Birula and Schinzel in \cite{BBS}.

On the other hand, the above encouraging features of open rank seem not to suffice for the determination of maximum rank. Indeed, the Jelisiejew's improvement of the bound proved by Bia\l ynicki-Birula and Schinzel, exploit the fact that the induction procedure do not give a sharp upper bound for essentially ternary cubics (neither on rank nor on open rank). Even the Jelisiejew's bound, as a bound on maximum rank, is known to be not sharp for quartics. What is more, below we point out that the equality between maximum rank for ternary quartics and maximum open rank for essentially ternary quartics fails. To this end we shall give a geometric example, with some use of specific results from modern algebraic geometry. A more explicit and genuinely algebraic example, suggested by an anonymous referee, will follow.

To introduce the geometric example, let us recall, in general, that given subschemes $X,Y$ of a projective space $\operatorname{Proj} S^\bullet$, with respective ideals $I(X),I(Y)\subseteq S^\bullet$, the subscheme $Y'$ defined by the ideal \[\left(I(X):I(Y)\right):=\left\{f: fg\in I(X)\;\forall g\in I(Y)\right\}\] always contains $X\setminus Y$ (as sets). When $X$ is reduced we have $Y'=\overline{X\setminus Y}$ as topological spaces, but if, say, $Y$ is a hyperplane defined by a (nonzero) linear form $l$, $I(Y)=\left(l\right)$, and $X$ is its double, $I(X)=\left(l^2\right)$, then we have $Y'=Y$ and $X\setminus Y=\emptyset$. We shall need to consider a case where $Y$ is a line in the plane and $X$ a zero-dimensional scheme (intuitively a set of points, some of them coinciding in a way that is encoded in the scheme structure). In this case, or more generally whenever $Y$ is a hyperplane, $I(Y)=(l)$, the intersection ideal $I(X)\cap I(Y)$ clearly consists of all $lf$ with $f\in\left(I(X):I(Y)\right)$. This gives rise to an exact sequence of graded $S^\bullet$-modules
\[
0\to\left(I(X):I(Y)\right)(-1)\to I(X)\to\frac{I(X)}{I(X)\cap I(Y)}\to 0\;,
\]
where $(-1)$ denotes a degree shift and the first map is the multiplication by~$l$. Taking into account that $I(X)/\left(I(X)\cap I(Y)\right)\cong\left(I(X)+I(Y)\right)/I(Y)$, and passing to sheaves (generalities about the technical procedure can be found, e.g., in \cite[p.~116, Definition]{H}), we get the operative description \cite[Notation~4.3]{BB3} that underlies \cite[Lemma 5.1]{BB3}, a result we are going to use. The scheme $Y'$ can be called the residual scheme of $Y$ to $X\cup Y$ with respect to $\operatorname{Proj} S^\bullet$, according to \cite[Def.~9.2.1]{F}), and we shall use a notation of the form $\mbox{Res}_Y(X)$.

A technical condition of the form $h^1\left(\mathcal{I}_X(d)\right)=0$, involved in the statement of the mentioned lemma, simply amounts to say that $X$ imposes independent conditions to degree $d$ forms. More geometrically, this also means that for whatever subscheme $X'\subsetneq X''$ of $X$, there always exists a degree $d$ form that vanishes on (the whole of) $X'$ but not on $X''$. Residual schemes are also involved in the other auxiliary \cite[Lemma 34]{BGI}, but at the technical core of the example lies a further interesting result in a similar vein, for which we refer to \cite[Remarque (i), p.~116]{EP}.

\begin{example}\label{Esempio}
Assume $\dim S_1=3$ and let $\nu:\Ps S_1\to\Ps S_4$, $\Span{v}\mapsto\Span{v^4}$ be the Veronese embedding. We can certainly fix a degree $4$ curvilinear zero-dimensional scheme $Z\subset\Ps S_1$ (\footnote{We say that a zero-dimensional scheme is \emph{curvilinear} if it can be embedded in some smooth curve.}) such that $\dim\Span{\nu(Z)}=3$ and $\deg (Z\cap L)=3$ for (exactly) one line $L$. Since $\dim\Span{\nu(Z)}=3$, we can fix $P=\Span{f}\in\Ps S_4$, such that $P\in\Span{\nu(Z)}$ and $P\not\in\Span{\nu\left(Z'\right)}$ for every subscheme $Z'\subsetneq Z$. We show that $\Ork f\ge 8$.

Assume that $a:=\Ork f\le 7$. Fix a closed set $X\subsetneq\Ps S_1$ containing the union of the finitely many lines $D$ with $\deg (Z\cap D)\ge 2$. In particular, $X$ contains $L\cup Z_{red}$. Fix a set $B\subset\Ps S_1\setminus X$ such that $\sharp(B)=a$, $P\in\Span{\nu(B)}$ and $P\not\in\Span{\nu\left(B'\right)}$ for all $B'\subsetneq B$. Because of the last condition, $h^1(\mathcal {I}_B(4)) =0$. Therefore, at most $5$ of the points of $B$ are collinear. Since $Z_{red}\subset X$, we have $Z\cap B =\emptyset$, and since $P\in\Span{\nu(B)}\cap\Span{\nu(Z)}$, we have $h^1(\mathcal {I}_{Z\cup B}(4))>0$. Since $\deg (Z\cup B) = 4+a \le 11 < 4\cdot 3$, either there is a line $R\subset\Ps S_1$ such that $\deg (R\cap (Z\cup B)) \ge 6$ or there is a conic $C\subset\Ps S_1$ such that $\deg (C\cap (Z\cup B)) \ge 10$ (see \cite[Remarque (i), p.~116]{EP} and take into account that, according to \cite[p.~112, l.~3]{EP}, by a ``groupe de points'' is meant a zero-dimensional scheme, not necessarily reduced).

First assume the existence of a line $R\subset\Ps S_1$ such that $\deg (R\cap (Z\cup B))\ge 6$. Since $\deg (Z) =4$, $B\cap X =\emptyset$ and $X$ contains each line $D$ with $\deg (Z\cap D)\ge 2$, we have $\deg (Z\cap R)\le 1$. Hence $\sharp (B\cap R) \ge 5$. Since  $h^1(\mathcal {I}_B(4)) =0$, we get $\sharp (B\cap R) =5$, $\deg (Z\cap R) =1$ and $\deg (R\cap (Z\cup B))) = 6$. We have $\deg (\mbox{Res}_R(Z\cup B) ) =a+4-6 \le 5$. Hence either $h^1(\mathcal {I}_{\mbox{Res}_R(Z\cup B)}(3)) =0$ or $a=7$ and there is a line $R'\subset \Ps S_1$ such that $R'\supseteq \mbox{Res}_R(Z\cup B)$ (\cite[Lemma 34]{BGI}). First assume $h^1(\mathcal {I}_{\mbox{Res}_R(Z\cup B)}(3)) =0 $. Since $Z\cap B =\emptyset$, we can exploit \cite[Lemma 5.1]{BB3} and deduce that $Z\cup B \subset R$, because $P$ is in $\Span{\nu(B)}\cap\Span{\nu(Z)}$, but not in the span of subschemes that are smaller than $B$ or smaller than $Z$. Hence $Z$ is contained in a line. Since $\deg (Z\cap L) =3$, we get a contradiction. Now assume $a=7$ and the existence of a line $R'$ such that $R'\supseteq \mbox{Res}_R(Z\cup B)$. Since $\deg (R\cap Z)=1$, we have $\deg (\mbox{Res}_R(Z)) =3$. Since $R'$ contains the degree $3$ subscheme  $\mbox{Res}_R(Z)$ of $Z$, we have $R'=L$. Since $B\cap L =\emptyset$, we get $B\subset R$, a contradiction.

Now assume the existence of a conic $C\subset\Ps S_1$ such that $\deg (C\cap (Z\cup B)) \ge 10$ (we do not assume that the conic is smooth). Since $\deg (\mbox{Res}_C(Z\cup B)) \le 1$, we have $h^1(\mathcal {I}_{\mbox{Res}_C(Z\cup B)}(2)) =0$. As in the proof of \cite[Theorem 1]{BB2}, or as in \cite[Lemma 5.1]{BB3}, with the degree two divisor $C$ instead of a hyperplane, and since $Z\cap B =\emptyset$, we get $Z\cup B \subset C$. Since $C$ is a conic and $\deg (Z\cap L) =3$, $L$ must be a component of $C$, say $C = L\cup L'$ with $L'$ a line (we allow the case $L' = L$). Since $L\subset X$, we have $B\cap L = \emptyset$. Hence $B\subset L'$. Since $\deg (C\cap (Z\cup B)) \ge 10$, we have $a\ge 6$. Hence $h^1(\mathcal {I}_B(4))>0$, a contradiction.
\end{example}

A more explicit example with a simpler check (due to an anonymous referee) can be given as follows.

\begin{example}\label{SE}
Let $S_\bullet=\mathbb{K}\left[x_0,x_1,x_2\right]$ and $f={x_0}^4+{x_1}^4+\left(x_0+x_1\right)^4+{x_2}^4$. For each $g\in S_\bullet$ let $\al(g)$ be the dimension of the space $\{\partial_tg:t\in S^\bullet\}$ (in other words, the length of the apolar algebra) and let $X\subset\Ps S_1$ be the line $x^2=0$, with $\Span{x^2}:=\Span{x_0,x_1}^\perp$. We have
\[
\Ork(f,X)\ge\al(f)-\al\left(\partial_{x^2}f\right)\;.
\]
(see \cite[Prop.~3]{BuT} which, as explicitly mentioned right before its statement, is a result that was essentially observed in \cite{DT}).  A calculation on each degree up to $4$ gives $\al(f)=1+3+4+3+1=12$ and $\al\left(\partial_{x^2}f\right)=1+1+1+1=4$. Hence $\Ork(f,X)\ge 8$. Note also that $f$ essentially depends on three variables, because $\partial_{x^0}f$, $\partial_{x^1}f$ and $\partial_{x^2}f$ are linearly independent.
\end{example}

We shall see later (see Remark~\ref{EO}) that $f$ essentially depends on three variables in the more general situation of Example~\ref{Esempio}. Therefore, the maximum open rank for essentially ternary quartics is at least~$8$. Note that the maximum rank for ternary quartics is $7$ instead (see \cite[Chap.~3]{K} or \cite{D}). Independently of this remark, even before Blekherman and Teitler's work \cite{BT} there were signs that the upper bounds given by the induction procedure on open rank are quite relaxed. On the other hand, the ability of giving nontrivial upper bound, in a relatively simple way, give another indication on the attention that the notion of open rank may deserve. By this reason, we start now proving that the maximum open rank of essentially ternary quartics is actually $8$ (in particular, it is strictly less than the value given by the induction procedure based on maximum open rank in degree $3$).

\section{Maximum Open Rank for Ternary Quartics}\label{Tq}

Our goal in this section is to prove that $\Ork f\le 8$ for all $f\in S_4$ when $\dim S_1=3$. We need some auxiliary results, some of them of independent interest.

\begin{proposition}\label{Duerette}
Assume $\dim S_1=3$ and that a closed $X\subsetneq\Ps S_1$ is given. Let $f\in S_4$ and suppose that there exist distinct $\Span{l^0},\Span{l^1}\in\Ps S^1$ such that 
\begin{itemize}
\item none of the lines $l^0=0$, $l^1=0$ in $\Ps S_1$ is contained in $X$,
\item $\partial_{l^0l^1}f=0$.
\end{itemize}
Then $\Ork(f,X)\le 8$.
\end{proposition}
\begin{proof}
For each $i\in\{0,1\}$, let $X_i$ be the intersection of $X$ with the line $l^i=0$ and let us consider the dually paired graded rings $R_i^\bullet:=S^\bullet/\left(l^i\right)$ and $R_{i,\bullet}:=\Ker\partial_{l^i}\subset S_\bullet$. Let us pick $f_0\in R_{0,4}$ such that $\partial_{l^1+\left(l^0\right)}f_0=\partial_{l^1}f$ and set $f_1:=f-f_0$. We have $f=f_0+f_1$ with $f_0\in R_{0,4}$, $f_1\in R_{1,4}$.

Suppose first that $\partial_{l^0}f\ne0$ and $\partial_{l^1}f\ne 0$. Let $\Span{v_{01}}:=\Span{l^0,l^1}^\perp$ and note that $f_0,f_1$ can be replaced with $f_0+\lambda{v_{01}}^4$, $f_1-\lambda{v_{01}}^4$, for any $\lambda\in\mathbb{K}$. Moreover, each of $f_0-\lambda{v_{01}}^4$ and $f_1-\lambda{v_{01}}^4$ may be a fourth power of a linear form for at most two values of $\lambda$ (see, e.g., \cite[Rem.~2.2]{D2} and take into account that $f_0,f_1\not\in\Span{{v_{01}}^4}$ because of the assumption $\partial_{l^0}f\ne0$, $\partial_{l^1}f\ne 0$). Hence we can assume that $f_0,f_1$ are not fourth powers. Since they can be regarded as binary forms, by Proposition~\ref{SOr} we have that their open ranks as such, which we denote by $\Ork_{R_0}f_0$ and $\Ork_{R_1}f_1$, are at most $4$. Therefore \[\Ork(f,X)\le\Ork\left(f_0,X\right)+\Ork\left(f_1,X\right)\le\Ork_{R_0}\left(f_0,X_0\right)+\Ork_{R_1}\left(f_1,X_1\right)\le 8\;.\]

When $\partial_{l^0}f=0$ or $\partial_{l^1}f=0$, $f$ can be regarded as a binary form and we deduce $\Ork(f,X)\le 5< 8$ from Proposition~\ref{SOr}.\hfill $\square$
\end{proof}

\begin{lemma}\label{Distinti}
Assume $\dim S_1=3$, let $g\in S_3$ and $\Sigma\subset\Ps S^1$ be a finite set such that
\[\partial_{l'l''}g\ne 0\;,\qquad\forall \Span{l'},\Span{l''}\in\Sigma\;.\]
Then there exist distinct $\Span{l},\Span{m}\in\Ps S^1\setminus\Sigma$ such that
\[
\partial_{lm}g=0\;.
\]
\end{lemma}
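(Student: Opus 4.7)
The plan is to study the kernel $K := \ker\!\left(g_{2,1}\colon S^2 \to S_1\right)$ and exhibit a rank-$2$ element $q = l^1 l^2 \in K$ whose factors avoid $\Sigma$: since $l^1 l^2 \ins g = 0$, such a $q$ directly furnishes the required pair. First I would dispose of the case where $g$ does not essentially depend on $3$ variables: if $l_0 \ins g = 0$ for some nonzero $l_0 \in S^1$, then $\Span{l_0} \notin \Sigma$ (otherwise $l_0^2 \ins g = 0$ would violate the hypothesis), and $l_0 m \ins g = 0$ for every $m$, so any $\Span{m} \in \Ps S^1 \setminus \left(\Sigma \cup \{\Span{l_0}\}\right)$ yields the second point. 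Henceforth $g_{2,1}$ would be surjective, so $\dim K = 3$ and $\Ps K$ is a linear $\Ps^2$ inside $\Ps S^2 \cong \Ps^5$.

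Let $\mathcal{R} \subset \Ps S^2$ denote the cubic hypersurface of rank-$\le 2$ symmetric tensors, and $V \subset \mathcal{R}$ the Veronese surface of squares. Then $\mathcal{E} := \Ps K \cap \mathcal{R}$ has dimension at least $1$, and $\mathcal{E} \not\subseteq V$: otherwise two distinct squares $\Span{l^2}, \Span{m^2} \in \mathcal{E}$ would give $l^2 - m^2 = (l-m)(l+m) \in K$, a rank-$2$ element in $\mathcal{E} \setminus V$. For each $\sigma \in \Sigma$ introduce the subspace $B_\sigma := \Ps\!\left(K \cap \sigma \cdot S^1\right) \subseteq \Ps K$ parametrizing the $\sigma$-divisible elements of $K$; since $K \subseteq \sigma \cdot S^1$ would force $K = \sigma \cdot S^1$ (by dimension) and hence $\sigma^2 \in K$, contradicting $\sigma^2 \ins g \neq 0$, each $B_\sigma$ has dimension at most $1$. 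A crucial pairwise property is that for distinct $\sigma_i, \sigma_j \in \Sigma$ one has $B_{\sigma_i} \cap B_{\sigma_j} \subseteq \{\Span{\sigma_i \sigma_j}\}$, while $\sigma_i \sigma_j \notin K$ by hypothesis, so $B_{\sigma_i} \cap B_{\sigma_j} = \emptyset$; since two disjoint projective lines cannot coexist in $\Ps K \cong \Ps^2$, at most one $B_\sigma$ can be a line. Moreover, if some component of $\mathcal{E}$ is an irreducible conic lying in $V$, then it is the Veronese image of a line $\ell \subset \Ps S^1$ satisfying $l^2 \ins g = 0$ for every $\Span{l} \in \ell$, and polarization then yields $l^1 l^2 \ins g = 0$ for any two distinct $\Span{l^1}, \Span{l^2} \in \ell \setminus \Sigma$, handling that case directly.

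Putting these observations together via a case analysis on the irreducible decomposition of $\mathcal{E}$ (or the degenerate case $\mathcal{E} = \Ps K$, where $V \cup \bigcup_\sigma B_\sigma$ has dimension at most $1$ inside the $2$-dimensional $\Ps K$), one would identify an irreducible component of $\mathcal{E}$ that is contained neither in $V$ nor in any single $B_\sigma$; a point of this component away from the finite union of proper bad subsets provides the desired rank-$2$ element $l^1 l^2 \in K$ with $\Span{l^1} \neq \Span{l^2}$, both outside $\Sigma$. The hardest case will be the borderline situation where $\mathcal{E}$ is set-theoretically a single line coinciding with some $B_\sigma$; I would rule this out by a direct computation: writing $K = \Span{\sigma l_1, \sigma l_2, q_0}$ with $q_0 \notin \sigma \cdot S^1$ and using coordinates $(a,b,c)$ on $\Ps K$ for which $B_\sigma = \{c = 0\}$, the restriction of $\det$ to $\Ps K$ factors as $c$ times a quadratic whose coefficients of $a^2$, $ab$, $b^2$ turn out to be (up to sign) the coefficients of $q_0$ outside $\sigma \cdot S^1$, and these do not all vanish precisely because $q_0 \notin \sigma \cdot S^1$.
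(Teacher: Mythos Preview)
Your argument is correct, and it begins exactly as the paper does: both study $K=\Ker g_{2,1}$, show its intersection $\mathcal E$ with the rank $\le 2$ locus is at least one–dimensional, and observe (via $l^2-m^2=(l-m)(l+m)$) that $\mathcal E\not\subseteq V$. From there the routes diverge. The paper's hard case is precisely the situation where some $\bar l\in\Sigma$ has two products $\bar lx,\bar ly\in K$; it then chooses a non-square $q\in K\setminus\Span{\bar lx,\bar ly}$, shows $q\notin\bar lS^1$, and finishes by an ad~hoc geometric analysis of pencils of conics through the base points of $\Span{\bar lz,q}$, handling several subcases by explicit intersection arguments in $\Ps S_1$. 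Your approach is more structural: you encode ``has a factor in $\Sigma$'' by the linear spaces $B_\sigma=\Ps(K\cap\sigma S^1)$, note that the hypothesis $\sigma_i\sigma_j\notin K$ makes the $B_\sigma$ pairwise disjoint inside $\Ps K\cong\Ps^2$ (so at most one is a line), and reduce everything to the single borderline possibility $\mathcal E=B_\sigma$ set–theoretically, which you rule out by a determinant computation. This buys a much cleaner case analysis, and the pairwise disjointness of the $B_\sigma$ is a genuinely nice observation that the paper does not isolate.

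One small imprecision worth flagging: in the determinant computation the coefficients of $a^2,ab,b^2$ in $Q$ are not literally the coefficients of $q_0$ outside $\sigma S^1$. What one actually gets (choosing $\sigma=x$ and writing $l_i=p_ix+q_iy+r_iz$, $q_0=\alpha y^2+\beta yz+\gamma z^2+\cdots$) is, up to a scalar, the binary form $\alpha C_0^2-\beta B_0C_0+\gamma B_0^2$ with $B_0=aq_1+bq_2$, $C_0=ar_1+br_2$. The conclusion you need still follows: since $\sigma^2\notin K$, the vectors $(q_1,r_1),(q_2,r_2)$ are linearly independent (otherwise a nontrivial combination of $l_1,l_2$ lies in $\Span{\sigma}$, forcing $\sigma^2\in K$), so $(a,b)\mapsto(B_0,C_0)$ is an isomorphism and $Q(a,b,0)\equiv 0$ would force $\alpha=\beta=\gamma=0$, contradicting $q_0\notin\sigma S^1$. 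Hence $c\nmid Q$ and $\mathcal E$ has a component other than $B_\sigma$, as you want.
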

\begin{proof}
The dimension of $L:=\Ker g_{2,1}$ is at least three because the partial polarization $g_{2,1}$ maps $S^2$ into $S_1$. Since the locus $X\subset\Ps S^2$ given by reducible forms is a hypersurface, we have that the intersection $Y:=\Ps L\cap X$ is an algebraic set of dimension at least one. For distinct $\Span{a}\,,\Span{b}\in\Ps S^1$, we have that if $\Span{a^2},\Span{b^2}\in Y$, $\lambda\in\mathbb{K}$, then $\Span{a^2+\lambda b^2}\in Y$, and $a^2+\lambda b^2$ is a simply degenerate quadratic form for all $\lambda\ne 0$ ($\operatorname{char}\mathbb{K}=0\ne 2$). We deduce that the set of all $\Span{lm}\in Y$ with distinct $\Span{l},\Span{m}\in\Ps S^1$, is a dense open subset $U\subseteq Y$. We have to choose $\Span{lm}\in U$ with $\Span{l},\Span{m}\not\in\Sigma$.

We can certainly assume that there exist $\Span{r}\in\Sigma$ and two distinct points $\Span{x},\Span{y}\in\Ps S^1$ such that $\Span{rx},\Span{ry}\in Y$, otherwise the required $\Span{l},\Span{m}$ can obviously be found, since $U$ is an infinite set. Let us fix such $\Span{r}, \Span{x}, \Span{y}$. They are linearly independent because $r\in\Span{x,y}$ would lead to $\partial_{r^2}g=0$, contrary to the hypothesis on $\Sigma$. Hence we have dually paired graded rings $R^\bullet:=\mathbb{K}[x,y]\subset S^\bullet$ and $R_\bullet:=\Ker\partial_{r}\subset S_\bullet$. Since $\partial_{rx}g=0$ and $\partial_{ry}g=0$ we have $g=v^3+h$ for some $h\in R_3$ and $v\in\Span{x,y}^\perp\subset S_1$. Since the partial polarization $h_{2,1}$ maps $R^2$ into $R_1$, and $R^\bullet$ is a ring of binary forms, we can find nonzero $l,t\in R^1$ such that $\partial_{lt}h=0$. By the hypothesis on $\Sigma$ we can assume that $\Span{l}\not\in\Sigma$. For infinitely many $\lambda\in\mathbb{K}$ we have $\Span{t+\lambda r}\not\in\Sigma\cup\left\{\Span{l}\right\}$, and let us fix $m:=t+\lambda r$ for whatever one of them. Since $\partial_{l}v=0$, $\partial_{r}h=0$, $\partial_{lt}h=0$ and $g=v^3+h$, we conclude that $\partial_{lm}g=0$.\hfill $\square$
\end{proof}

\begin{proposition}\label{Rette}
Assume $\dim S_1=3$, let $f\in S_d$ with $d\ge 3$ and suppose that $\Sigma\subset\Ps S^1$ is a finite set such that $\partial_{l'l''}f\ne 0$ for all $\Span{l'},\Span{l''}\in\Sigma$. Then there exist distinct \[\Span{l^1},\ldots,\Span{l^{d-1}}\in\Ps S^1\setminus\Sigma\] such that $\partial_{l^1\cdots l^{d-1}}f=0$.
\end{proposition}
\begin{proof}
The case $f=0$ being trivial, let us assume $f\ne 0$. Recall that for every nonzero $h\in S_e$ and nonzero $x\in S^1$, we have that $\partial_{x^m}h=0$ if and only if $\Span{x}$ is a point of multiplicity at least $e+1-m$ of the curve $h=0$ in $\Ps S^1$. In particular, when $m\le e$, the set of all $\Span{x}\in\Ps S^1$ with $\partial_{x^m}h\ne 0$ is nonempty and open. Exploiting this simple fact, we can inductively pick distinct $\Span{l^3},\ldots,\Span{l^{d-1}}\in\Ps S^1\setminus\Sigma$ such that \[\partial_{l^il^jl^3\cdots l^{d-1}}f\ne 0\;,\qquad\forall\Span{l^i},\Span{l^j}\in\Sigma':=\Sigma\cup\left\{\;\Span{l^3}\,,\,\ldots\,,\,\Span{l^{d-1}}\;\right\}\;.\]
Then the result follows from Lemma~\ref{Distinti} with $g:=\partial_{l^3\cdots l^{d-1}}f$ and $\Sigma'$ in place of~$\Sigma$.\hfill $\square$
\end{proof}

Exploiting the above proposition in the case when $f$ is a quartic, we can keep three distinct lines, $l^1=0$, $l^2=0$, $l^3=0$ with $\partial_{l^0l^1l^2}f=0$, from falling into a given forbidden locus $X$, unless $\partial_{l'l''}f=0$ for some lines $l'=0$, $l''=0$ (which unfortunately fall into $X$ and may be not distinct). A decomposition procedure along three lines, similar to that along two which was used in the proof of Proposition~\ref{Duerette}, looks promising. This idea has been successful for the maximum rank of ternary quartics: see \cite[Prop.~3.1 and~5.1]{D}. In that case as well, a condition $\partial_{l'l''}f=0$ with coinciding lines needed to be worked out separately (\cite[Prop.~5.2]{D}). One of the outcomes of the present work is that the strategy used for \cite[Prop.~3.1]{D} can be considerably simplified under the hypothesis that $\partial_{l^1l^2}f$ is not a square. Note that this condition is only slightly stronger: we wish that the rank of $\partial_{l^1l^2}f$ is at least two, whereas $\partial_{l'l''}f\ne 0$ means that the rank is at least one.

A further simplification of the line of proof of \cite[Prop.~3.1]{D} was set up in \cite{D3}. To let it work with open rank, we need to slightly adapt \cite[Lemmas~2.6 and 2.7]{D3}, by adding the information that the decompositions given in those lemmas can be chosen out of a given algebraic set (we shall also slightly weaken the hypothesis of \cite[Lemma~2.7]{D3}). Let us recall that when $\dim S_1=3$, $f\in S_d$ and $\partial_lf=0$ for some nonzero $l\in S^1$, $f$ can be regarded as a binary form in the graded subring $R_\bullet:=\Ker\partial_l\subset S$, which is dually paired with the quotient $S/\left(l\right)$ in a natural way. In this case, the length of $f$ as an element of $R_d$ does not depend on the choice of $l$. We call it the \emph{binary length} of $f\in S_d$ and denote it by $\blen f$ (see \cite[Def.~2.1]{D3} for a general definition).

\begin{lemma}\label{D327}
Let $\Span{g'}\in\Ps S_d$ with $\dim S_1=3$, $d>0$, and let us write $d=2s+\varepsilon$, with $\varepsilon\in \{0,1\}$ and $s$ integer. Let $\Span{l^0},\ldots,\Span{l^t}\in\Ps S^1$ be distinct and such that $\partial_{l^0}g'=0$, and for each $i\in\{1,\ldots, t\}$ let $g_i\in S_{d+1}$ be such that $\partial_{l^i}g_i=g'$. Moreover, suppose that a closed subset $Y\subset\Ps S_1$ that does not contain the line $l^0=0$ is given. If
\[
\blen g'=s+1\;,\quad\blen\partial_{l^0}g_1\ge s+\varepsilon\;,\quad\ldots\quad,\blen\partial_{l^0}g_t\ge s+\varepsilon\;,
\]
then there exists a power sum decomposition
\begin{equation}\label{Decv}
g'={v_1}^d+\cdots+{v_r}^d
\end{equation}
such that: $r\le s+1+\varepsilon$, $Y$ contains none of $\Span{v_1},\ldots,\Span{v_r}$ and, for each $i\in\{1,\ldots, t\}$,
\begin{itemize}
\item $l^i$ vanishes on none of $v_1,\ldots, v_r$,
\item $\blen\left(g_i-F_i\right)=s+1+\varepsilon$, where
\[
F_i:=\frac1{(d+1)l^i\left(v_1\right)}{v_1}^{d+1}+\cdots+\frac1{(d+1)l^i\left(v_r\right)}{v_r}^{d+1}\;.
\]
\end{itemize}
\end{lemma}
\begin{proof}
The proof can go in the same way as that of \cite[Lemma~2.7]{D3}, with the following additional cautions.

At the beginning of that proof, dually paired rings $R_0^\bullet:=S^\bullet/\left(l^i\right)$ and $R_{0,\bullet}:=\Ker\partial_{l^i}\subset S_\bullet$ are considered (among others $R_i$s). Then a line $\Ps L$ in a subspace $\Ps H\le\Ps R_0^{s+1+\varepsilon}$ is chosen. The vectors $v_1,\ldots, v_r$ are the roots in $\Ps R_{0,1}$ (the line $l^0=0$) of a form $h$, with $\Span{h}$ chosen in a suitable cofinite subset of $\Ps L$, say it~$V$.

The above choices are allowed by \cite[Lemma~2.6]{D3}. In the proof of that lemma, two coprime generator of the apolar ideal of $g'$ are used (and denoted by $l$ and $h^0$). The fact that they are coprime easily implies that the algebraic set $\widetilde{Y}$ of all $\Span{h}\in\Ps H$ that have at least one root in $Y$ does not fill $\Ps H$. Hence, in the proof of \cite[Lemma~2.7]{D3}, the line $\Ps L$ can certainly be chosen with the additional property of being not contained in $\widetilde{Y}$. Therefore $\Span{h}$ can be chosen in $V\setminus\widetilde{Y}$, because that set is cofinite in $\Ps L$ as well.

Moreover, note that a condition $\blen\partial_{l^0}g_1=\cdots=\blen\partial_{l^0}g_t=s+1$ is used in the proof of \cite[Lemma~2.7]{D3}, but only to get \cite[Eq.~(8)]{D3}. It is easy to see that \cite[Eq.~(8)]{D3} holds also under the weaker hypothesis $\blen\partial_{l^0}g_1\ge s+\varepsilon,\ldots,\blen\partial_{l^0}g_t\ge s+\varepsilon$, if one takes into account that those binary lengths can not exceed $s+1$, because $d=2s+\varepsilon$.\hfill $\square$
\end{proof}

Now the proof of \cite[Prop.~3.2]{D3} can easily be adapted as follows.

\begin{proposition}\label{Decomp}
Assume $\dim S_1=3$ and that a closed $X\subsetneq\Ps S_1$ is given. Let $f\in S_4$ and suppose that there exist distinct $\Span{l^0},\Span{l^1},\Span{l^2}\in\Ps S^1$ such that 
\begin{itemize}
\item none of the lines $l^0=0$, $l^1=0$, $l^2=0$ in $\Ps S_1$ is contained in $X$,
\item $\partial_{l^0l^1l^2}f=0$,
\item $\partial_{l^1l^2}f$ is not a square.
\end{itemize}
Then $\Ork(f,X)\le 8$.
\end{proposition}
\begin{proof}
According to Proposition~\ref{Duerette}, we can assume $\partial_{l^0l^1}f\ne 0$, $\partial_{l^0l^2}f\ne0$. Hence we can exploit Lemma~\ref{D327} with $t=2$, $g'=\partial_{l^1l^2}f$, $g_1=\partial_{l^2}f$, $g_2=\partial_{l^1}f$ and $Y=X$. We get $\Span{v_1},\Span{v_2}\in\Ps S_1\setminus X$ such that
\[
\partial_{l^1l^2}f={v_1}^2+{v_2}^2\;,
\]
$l^i\left(v_j\right)\ne 0$ for all $i,j\in\{1,2\}$, and setting
\[
F'_1:=\frac1{3l^1\left(v_1\right)}{v_1}^3+\frac1{3l^1\left(v_2\right)}{v_2}^3\;,\quad F'_2:=\frac1{3l^2\left(v_1\right)}{v_1}^3+\frac1{3l^2\left(v_2\right)}{v_2}^3\;,
\]
we have $\blen\left(\partial_{l^2}f-F'_1\right)=\blen\left(\partial_{l^1}f-F'_2\right)=2$.

Let
\[
F_1:=\frac1{12l^1\left(v_1\right)l^2\left(v_1\right)}{v_1}^4+\frac1{12l^1\left(v_2\right)l^2\left(v_2\right)}{v_2}^4
\]
and let us exploit again Lemma~\ref{D327}, now with $t=1$, $l^1, l^2$ in place of $l^0, l^1$, $\partial_{l^2}f-F'_1$ in place of $g'$, $f-F_1$ in place of $g_1$ and $Y=X$. We get \[\Span{w_1},\ldots,\Span{w_r}\in\Ps S_1\setminus X\] such that $r\le 3$,
\[
\partial_{l^2}f-F'_1={w_1}^3+\cdots+{w_r}^3\;,
\]
$l^2\left(w_i\right)\ne 0$ for all $i\in\{1,\ldots ,r\}$, and setting
\[
G_2:=\frac1{4l^2\left(w_1\right)}{w_1}^4+\cdots+\frac1{4l^2\left(w_r\right)}{w_r}^4
\]
we have $\blen\left(f-F_1-G_2\right)=3$. We have $\Ork_{R_2}\left(f-F_1-G_2\right)=3$ by Proposition~\ref{SOr}. Since $X$ does not contain the line $l^2=0$ we also have \[\Ork\left(f-F_1-G_2,X\right)\le\Ork_{R_2}\left(f-F_1-G_2\right)=3\;.\] We conclude that
\[
\Ork(f,X)\le\Ork\left(f-F_1-G_2,X\right)+r+2\le3+3+2=8\;.
\]
\hfill $\square$
\end{proof}

Our next job is to work out the special case where $\partial_{l^1l^2}f$ is a square. Of course, since the indices can be rearranged, we can suppose that $\partial_{l^1l^2}f$, $\partial_{l^0l^2}f$ and $\partial_{l^0l^1}f$ are all squares.

\begin{lemma}\label{Square}
Assume $\dim S^1=3$ and that a finite set $\Sigma\subset\Ps S^1$ is given. Let $f\in S_4$ and suppose that, whenever $\Span{l^0},\Span{l^1},\Span{l^2}\in\Ps S^1\setminus\Sigma$ are distinct and such that $\partial_{l^0l^1l^2}f=0$, we have that $\partial_{l^1l^2}f$, $\partial_{l^0l^2}f$ and $\partial_{l^0l^1}f$ are all squares.

Then there exist nonzero $x,y\in S^1$ such that $\partial_{xy}f=0$. Moreover, for each fixed $l^0,l^1,l^2$ as before (when they exist), we can take $x=l^1$.
\end{lemma}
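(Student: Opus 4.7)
The plan is to exploit the symmetry of the hypothesis under relabeling the three points of the triple, producing a $1$-parameter family of further valid triples, and then deduce the conclusion from an elementary fact about linear subspaces of squares in $S_2$.

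I first dispose of the boundary case: if no distinct $\Span{l^0},\Span{l^1},\Span{l^2}\in\Ps S^1\setminus\Sigma$ with $l^0l^1l^2\ins f=0$ exist at all, then the contrapositive of \autoref{Rette} (with $d=4$) produces $\Span{l'},\Span{l''}\in\Sigma$ satisfying $l'l''\ins f=0$, and I set $x=l'$, $y=l''$. Otherwise, fix any valid triple $(l^0,l^1,l^2)$; by relabeling, the hypothesis yields that $l^0l^1\ins f$ is itself a square, say $h^2$ with $h\in S_1$. If $h=0$ then $x=l^1,\ y=l^0$ already works, so from now on assume $h\ne 0$ and set $H:=h^\perp\subset S^1$, a two-dimensional subspace. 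Since contraction is a derivation, $l^0l^1m\ins f=m\ins h^2=2(m\ins h)h=0$ for every $m\in H$; hence for every $\Span{m}\in\Ps H$ outside the finite set $\Sigma\cup\{\Span{l^0},\Span{l^1}\}$, the triple $(l^0,l^1,m)$ is valid, so by hypothesis $l^1m\ins f$ is a square.

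Consider now the linear map $P\colon S^1\to S_2$, $m\mapsto l^1m\ins f$. By the previous step, $P$ carries a Zariski dense subset of $H$ into the closed cone $C\subseteq S_2$ of squares; by closedness of $C$, the whole linear subspace $P(H)\subseteq S_2$ lies in $C$. The critical observation is then that any linear subspace $W\subseteq S_2$ whose elements are all squares satisfies $\dim W\le 1$: indeed, if $u^2,v^2\in W$ were linearly independent then $u,v\in S_1$ would be too, and $u^2+v^2\in W$ would be a rank-$2$ quadratic form, hence not a square. Applied to $W=P(H)$, this forces $\dim P(H)\le 1$, so $\ker(P|_H)$ has dimension at least $1$; any nonzero $y$ in this kernel satisfies $l^1y\ins f=0$, proving the lemma with $x=l^1$. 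The only step demanding attention is the upgrade from "generic $m$" to "all of $H$", but this is immediate from the Zariski closedness of the cone of squares.
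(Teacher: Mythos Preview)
Your proof is correct and follows essentially the same approach as the paper's: both arguments start from a valid triple, use the resulting square to produce a $1$-parameter family of further valid triples through the line $\Span{h}^\perp$ (the paper's $\Span{v}^\perp$), and then reduce to the elementary fact that a linear subspace of $S_2$ consisting of squares has dimension at most one. Your phrasing of this last step via the map $P$ and Zariski closure is slightly cleaner than the paper's explicit computation $v_{l'-l''}^{\,2}=v_{l'}^{\,2}-v_{l''}^{\,2}$, but the substance is identical.
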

\begin{proof}
Assume first that distinct \[\Span{l^0},\Span{l^1},\Span{l^2}\in\Ps S^1\setminus\Sigma\;,\] such that $\partial_{l^0l^1l^2}f=0$, do exist. Then $\partial_{l^1l^2}f=v^2$ for some $v\in S_1$. Therefore $\partial_{l'l^1l^2}f=0$ for all $l'\in\Span{v}^\perp$. Hence, for all $\Span{l'}\in\Ps\Span{v}^\perp\setminus\left(\Sigma\cup\left\{\Span{l^1},\Span{l^2}\right\}\right)$, $\partial_{l^1l'}f$ is a square. By \cite[Lemma~4.1]{D2} (for a correct statement of that lemma, $f\in S_d$ must be replaced with $f\in S_{d+1}$, $d\ge 2$), we can find a nonzero $l'\in\Span{v}^\perp$ such that $\partial_{l^1l'}f=0$. Therefore, it suffices to set $x=l^1,y=l'$.

In the case when it is not possible to find distinct $\Span{l^0},\Span{l^1},\Span{l^2}\in\Ps S^1\setminus\Sigma$ with $\partial_{l^0l^1l^2}f=0$, Proposition~\ref{Rette} assures that $\partial_{xy}f=0$ for some $\Span{x},\Span{y}\in\Sigma$.\hfill $\square$
\end{proof}

Now the case where the simplifying assumption in Proposition~\ref{Decomp} is missed has been reduced to the case where $\partial_{xy}f=0$ for some $\Span{x},\Span{y}\in\Ps S^1$. When $\Span{x}=\Span{y}$ that condition becomes the same as in the hypothesis of \cite[Prop.~5.2]{D}. The basic idea in the proof of that proposition can be illustrated for $\Span{x}\ne\Span{y}$ as follows.

\begin{lemma}\label{Start}
Assume $\dim S_1=3$ and let $f\in S_4$. Suppose that:
\begin{itemize}
\item there are distinct $\Span{x},\Span{y}\in\Ps S^1$ with $\partial_{xy}f=0$,
\item there is $\Span{w}\in\Ps S^3$ with $\partial_wf=0$ and
\item the curve $w=0$ intersects the lines $x=0$ and $y=0$ in two groups of distinct points $P_0,P_1,P_2$ and $Q_0,Q_1,Q_2$, not coinciding with the intersection point $O=\Span{x,y}^\perp$.
\end{itemize}
Then, if $l^i=0$ is the line through $P_i$ and $Q_i$ for each $i$ (with $l^i\in S^1$), we have $\partial_{l^0l^1l^2}f=0$.
\end{lemma}
\begin{proof}
The curve $l^0l^1l^2=0$ contains the complete intersection of $w=0$ and $xy=0$ (a set of six distinct points, different from $O$). Then $l^0l^1l^2=\lambda w+mxy$ for some $\lambda\in\mathbb{K}$ and $m\in S^1$, by elementary intersection theory in algebraic geometry. Since $\partial_wf=0$ and $\partial_{xy}f=0$, we get $\partial_{l^0l^1l^2}f=\lambda\partial_wf+\partial_m\partial_{xy}f=0$.
\hfill $\square$
\end{proof}

The cubic $w$ in the above statement can actually be found, except for a special case. This fact will be stated in Lemma~\ref{Cut} below, along with an additional property of $w$ which implies that, for the above obtained three lines, $\partial_{l^1l^2}f$ is not a square. As a matter of facts, the outcome of the subsequent Lemmas~\ref{Cross} and~\ref{Sdoppia} is precisely that the simplifying assumption needed in the hypothesis of Proposition~\ref{Decomp} can not be missed, if not in a special case: when $\partial_zf$ is a cube for some nonzero $z\in S_1$ (equivalently, its rank is at most one). This result will be refined further by Lemma~\ref{Cc} and explicitly stated in Proposition~\ref{Predecomp}.

To begin with, we recall a fact already pointed out at the beginning of \cite[proof of Proposition~5.2]{D}.

\begin{lemma}\label{Dim}
Assume $\dim S_1=3$, let $f\in S_4$, $l\in S^1$ and suppose that $\partial_lf$ is not a cube. Then
\[
\dim\left(\Ker f_{3,1}\cap lS^2\right)\le 4\;.
\]
\end{lemma}
\begin{proof}
Let $g:=\partial_lf$. The space $\Ker f_{3,1}\cap lS^2$ is isomorphic to $\Ker g_{2,1}$ through multiplication by $l$. Then its dimension equals $6-\rk g_{2,1}$. A general and easy result is that if $p\in S_{d+\delta}$ for some positive integers $d,\delta$, then $\rk p_{\delta,d}=\rk p$ whenever one of these numbers is at most one (this holds regardless of $\dim S_1$). Hence $\dim\left(\Ker f_{3,1}\cap lS^2\right)\le 4$ if and only if $\rk g\ge 2$, that is, $\partial_lf$ is not a cube.\hfill $\square$
\end{proof}

With a bit of extra work we get the following.

\begin{lemma}\label{Dim3}
Assume $\dim S_1=3$, let $f\in S_4$, $\Span{x},\Span{y}\in\Ps S^1$ be distinct and suppose that $\partial_{xy}f=0$, $\partial_xf\ne 0$ and $\partial_yf$ is not a cube. Then
\[
\dim\left(\Ker f_{3,1}\cap yS^2\right)=4\quad\text{and}\quad\dim\Ker f_{3,1}=7\;.
\]
\end{lemma}
\begin{proof}
For every $w\in\Ker f_{3,1}+yS^2$ we have $\partial_{xw}f=0$, because $\partial_{xy}f=0$. Since $\partial_xf\ne 0$, there exists $w\in S^3$ such that $\partial_{xw}f\ne 0$, hence $w\not\in\Ker f_{3,1}+yS^2$. This shows that $\dim\left(\Ker f_{3,1}+yS^2\right)\le\dim S^3-1=9$, hence
\begin{multline*}
\dim\Ker f_{3,1}-\dim\left(\Ker f_{3,1}\cap yS^2\right)=\dim\left(\Ker f_{3,1}+yS^2\right)-\dim yS^2\\\le 9-6=3\;.
\end{multline*}
But $\dim\Ker f_{3,1}\ge\dim S^3-\dim S_1=7$ and, by the preceding lemma, we also have $\dim\left(\Ker f_{3,1}\cap yS^2\right)\le 4$, so the result readily follows.\hfill $\square$
\end{proof}

The above technical result has the following useful outcome.

\begin{lemma}\label{Cut}
Assume $\dim S_1=3$, let $f\in S_4$, $W:=\Ker f_{3,1}$, $\Span{x},\Span{y}\in\Ps S^1$ be distinct and $X$ be a finite subset of the line $y=0$. Suppose that $\partial_{xy}f=0$ and that $\partial_zf$ is not a cube for every nonzero $z\in S^1$. Then there exists a nonempty (Zariski) open subset $U\subset\Ps W$ such that for all $\Span{u}\in U$ the curve $u=0$ intersects $y=0$ in three distinct points outside $X$, and every form in $W$ that vanishes on two of them, vanishes on the other point too.
\end{lemma}
\begin{proof}
The ring $R^\bullet:=S^\bullet/(y)$ can be regarded as the graded ring of the line $y=0$, and for each $u\in S^d$ the intersection of $u=0$ and $y=0$ is the zero locus of $\overline u:=u+(y)\in R^d$. Then the linear system on $y=0$ cut by all $\Span{w}\in\Ps W$ is given by the groups of roots of the forms in $\overline W:=W/\left(W\cap yS^2\right)$. Since $\partial_zf$ is not a cube for every nonzero $z\in S^1$, the linear system of curves given by $W:=\Ker f_{3,1}$ has no base points by \cite[Lemma~2.1]{K}. Henceforth, the linear system on the line $y=0$ given by $\overline{W}$ has no base points, and by Lemma~\ref{Dim3} we have $\dim\overline{W}= 3$. Hence there is a nonempty open subset $\overline U\subset\Ps\overline W$ such that for every $\Span{\overline u}\in\overline U$ we have:
\begin{itemize}
\item$\overline{u}$ has three distinct roots outside $X$,
\item every form in $\overline{W}$ that vanishes on two of them, vanishes on the other root too.
\end{itemize}
Then it suffices to take $U$ as the preimage of $\overline U$ through the natural projection $\Ps W\setminus\Ps\left(yS^2\right)\to\Ps\overline W$.\hfill $\square$
\end{proof}

\begin{lemma}\label{Cross}
Assume $\dim S_1=3$ and that a finite set $\Sigma\subset\Ps S^1$ is given. Let $f\in S_4$ and suppose that there exist distinct $\Span{x},\Span{y}\in\Ps S^1$ such that $\partial_{xy}f=0$ and that $\partial_zf$ is not a cube for every nonzero $z\in S^1$. Then there exist distinct $\Span{l^0},\Span{l^1},\Span{l^2}\in\Ps S^1\setminus\Sigma$ such that $\partial_{l^0l^1l^2}f=0$ and $\partial_{l^1l^2}f$ is not a square.
\end{lemma}
\begin{proof}
Let $X$ be the set of all points on the line $y=0$ that belongs to $l^i=0$ for some $l^i\in\left(\Sigma\cup\left\{\Span{x}\right\}\right)\setminus\left\{\Span{y}\right\}$, and let $Y$ be similarly defined for the line $x=0$. We can exploit Lemma~\ref{Cut} for $x$, $y$, $X$, and also for $y$, $x$, $Y$ in place of them (respectively). We get nonempty open subsets of $\Ps\Ker f_{3,1}$, and whatever chosen $\Span{w}$ in their (nonempty) intersection fulfills the requirements in Lemma~\ref{Start}. That lemma gives three distinct lines $l^0=0$, $l^1=0$, $l^2=0$ with $\partial_{l^0l^1l^2}f=0$, and by construction we have $l^1,l^2,l^3\in\Ps S_1\setminus\Sigma$. To exclude that $\partial_{l^1l^2}f=v^2$ for some $v\in S_1$, note that in this case $l^0(v)=0$, hence $\Span{v}$ can not be the intersection point of $x=0$ and $y=0$. Therefore we can pick $\Span{m^0}\in\Span{v}^\perp\setminus\left\{\Span{l^0}\right\}$, and the two lines $m^0=0$, $l^0=0$ intersect at least one of the lines $x=0$, $y=0$, say the first one, in different points. But this is excluded because $\partial_{m^0l^1l^2}f=0$ and the curve $m^0l^1l^2=0$ shares with $w=0$ two intersections with $x=0$, but not the other.\hfill $\square$
\end{proof}

For the case where $\partial_{xy}=0$ with $\Span{x}=\Span{y}$, we can follow the line of the proof of \cite[Proposition~5.2]{D}.

\begin{lemma}\label{Sdoppia}
Assume $\dim S^1=3$ and that a finite set $\Sigma\subset\Ps S^1$ is given. Let $f\in S_4$ and suppose that there exists a nonzero $l'\in S^1$ such that $\partial_{{l'}^2}f=0$, and that $\partial_zf$ is not a cube for every nonzero $z\in S^1$. Then there exist distinct $\Span{l^0},\Span{l^1},\Span{l^2}\in\Ps S^1\setminus\Sigma$ such that $\partial_{l^0l^1l^2}f=0$ and $\partial_{l^1l^2}f$ is not a square.
\end{lemma}
\begin{proof}
Let $V:=\Ker f_{3,1}$, $W:=V\cap l'S^2$. By Lemma~\ref{Dim} we have  $\dim W\le 4$. In the second part of the proof of \cite[Proposition~5.2]{D} distinct points $P_0,P_1,P_2$ on the line $l'=0$ are chosen. In the present situation, we can furthermore impose that none of them belongs to $l''=0$ for any $\Span{l''}\in\Sigma\setminus\{\Span{l'}\}$, unless the linear system that is cut on the line $l'=0$ by the curves $p=0$ with $\Span{p}\in\Ps V$ admits a fixed point; but this is excluded by \cite[Lemma~2.1]{K}, because of our hypothesis that $\partial_zf$, with $\Span{z}\in\Ps S^1$, is never a cube. Then we can find, as in the mentioned proof, distinct $\Span{x^0},\Span{x^1},\Span{x^2}\in\Ps S^1$ such that $\partial_{x^0x^1x^2}f=0$ and for each $i$, $x^i=0$ meets $l'=0$ in $P_i$ only. It readily follows that $\Span{x^0},\Span{x^1},\Span{x^2}\in\Ps S^1\setminus\Sigma$. At this point we do not know if $\partial_{x^1x^2}f$ may be a square, but we can find the required $\Span{l^0},\Span{l^1},\Span{l^2}$ as follows.

Suppose that there exists a nonzero $y^1\in S^1$ such that $\partial_{x^1y^1}f=0$. If $\Span{x^1}\ne\Span{y^1}$ the statement follows from Lemma~\ref{Cross} with $x^1,y^1$ in place of $x,y$. When $\Span{x^1}=\Span{y^1}$ we have $\partial_{{x^1}^2}f=0$ and the statement follows again from Lemma~\ref{Cross}, now with $l'+x^1$, $l'-x^1$ in place of $x,y$. Therefore, we can assume that $\partial_{x^1y^1}f\ne 0$ for all nonzero $y^1\in S^1$. But with this assumption the statement follows from Lemma~\ref{Square}.\hfill $\square$
\end{proof}

Now we work out the special case where $\partial_zf$ is a cube for some nonzero $z\in S_1$, at the cost of leaving out an even more special case, which we now briefly introduce. As mentioned before Proposition~\ref{SOr}, in \cite[Def.~5.66]{IK} some extensions of the notion of length are presented. In particular, let us recall the notion of \emph{scheme length}, which nowadays is often called \emph{cactus rank}. Given $f\in S_d$, its cactus rank (or scheme length) is the minimum among the degrees of the zero dimensional schemes $Z$ in $\operatorname{Proj} S^\bullet$, such that the ideal $I(Z)$ in $S^\bullet$ is contained in the apolar ideal $I_f=\left\{x\in S^\bullet:\partial_xf=0\right\}$ of $f$. We shall denote it by $\crk f$.

We have also the following more geometric interpretation of $\crk f$ when $f\ne 0$. Let $\Span{x}\in\Ps S^d$, and $\nu_d:S_1\to S_d$, $\nu_d(v):=v^d$, be the Veronese map. Let $I(Z)$ and $I_f$ be the ideal of $Z$ and the apolar ideal of $f$. Then $x\in I(Z)$ if and only if the hyperplane $\Ps\Span{x}^\perp$ in $\Ps S_d$ contains $\nu_d(Z)$, and $x\in I_f$ if and only if the same hyperplane contains $\Span{f}$. For forms $y\in S^e$ of lower degree, we have that $y\in I(Z)$ if and only if $yS^{d-e}\subseteq I(Z)$ and $y\in I_f$ if and only if $yS^{d-e}\subseteq I_f$. It easily follows that $I(Z)\subseteq I_f$ if and only if $\Span{f}\in\Span{\nu(Z)}$ (scheme-theoretic, projective span, under the natural identification of $\Ps S_1$ with the set of all closed points of $\operatorname{Proj}S^\bullet$ and the similar identification for $\Ps S_d$ and $\operatorname{Proj}\Sy^\bullet S^d$). Hence $\crk f$ is the minimum among the degrees of the zero dimensional schemes $Z$ such that $\Span{f}\in\Span{\nu(Z)}$.

\begin{lemma}\label{Cc}
Assume $\dim S_1=3$ and that a finite set $\Sigma\subset\Ps S^1$ is given. Let $f\in S_4$ with $\crk f\ge 4$, and suppose that $\partial_zf$ is a cube for some nonzero $z\in S^1$. Then there exist distinct $\Span{l^0},\Span{l^1},\Span{l^2}\in\Ps S^1\setminus\Sigma$ such that $\partial_{l^0l^1l^2}f=0$ and $\partial_{l^1l^2}f$ is not a square.
\end{lemma}
\begin{proof}
Let $\partial_zf=v^3$ for some $v\in S_1$ and nonzero $z\in S^1$. Let us consider the graded rings $R_\bullet:=\Ker\partial_z\subset S_\bullet$, $R^\bullet:=S^\bullet/(z)$, with the induced apolarity pairing. We shall distinguish the two cases $z(v)\ne 0$ and $z(v)=0$.

Suppose first that $z(v)\ne 0$. Let
\[
g:=f-\frac1{4z(v)}v^4\;,
\]
so that $\partial_zg=0$. We have $\blen g\ge 3$ because $\crk f\ge 4$ (if a subscheme $Z$ works for $g$ then $Z\cup\left\{\Span{v}\right\}$ works for $f$). Hence $\blen g=3$ because $\deg g=4$. By \cite[Theorem 1.44]{IK}, the apolar ideal $I_g\subset R^\bullet$ is generated by two coprime forms in $R^3$. Then we can find $\overline{l^0}\overline{l^1}\overline{l^2}\in I_g$, with $\overline{l^0},\overline{l^1},\overline{l^2}\in R^1$, such that its roots in $\Ps R_1$ are distinct and lie on no line $l'=0$ with $\Span{l'}\in\Sigma\setminus\left\{\Span{z}\right\}$. Note that $\partial_{\overline{l^1}\overline{l^2}}g\ne 0$ because $I_f$ is generated by degree $3$ forms, and is a square ${v_0}^2$, with $l^0\left(v_0\right)=0$ because $\partial_{\overline{l^0}\overline{l^1}\overline{l^2}}g=0$. We can certainly choose representatives $l^0,l^1,l^2\in S^1$ (that is, $\overline{l^i}=l^i+(z)\in R^\bullet=S^\bullet/(z)$) such that $l^0(v)=0$, $l^1(v)\ne 0$, $l^2(v)\ne 0$. We have $\partial_{l^0l^1l^2}g=0$ and $\partial_{l^0}v^4=0$, hence $\partial_{l^0l^1l^2}f=0$. We have that $\Span{l^0},\Span{l^1},\Span{l^2}$ are distinct and do not lie in $\Sigma$, because the lines $l^0=0$, $l^1=0$ and $l^2=0$ intersect $z=0$ in the roots of $\overline{l^0},\overline{l^1},\overline{l^2}$. Moreover, we have
\[
\partial_{l^1l^2}f={v_0}^2+\frac{3l^1(v)l^2(v)}{z(v)}v^2\;,
\]
which is not a square because $\Span{v_0}\ne\Span{v}$.

Suppose now $z(v)=0$. If $v=0$, then $f\in R_\bullet$ and this is excluded because $\blen f\le 3$ is incompatible with the hypothesis $\crk f\ge 4$. Let us pick $\Span{x}\in\Span{v}^\perp\setminus\left(\Sigma\cup\left\{\Span{z}\right\}\right)$ and set $h:=\partial_xf$, so that $\partial_zh=\partial_x{v}^3=0$. Since $\deg h=3$, we can find a nonzero $\overline{l^1}\overline{l^2}\in R^2$, with $\overline{l^1},\overline{l^2}\in R^1$ and $\partial_{\overline{l^1}\overline{l^2}}h=0$. If $\Span{\overline{l^1}},\Span{\overline{l^2}}$ can be chosen different from $\Span{\overline x}$, with $\overline x=x+(z)\in R^1$, then for whatever chosen representatives $l^1,l^2\in S^1$ we have $\partial_{xl^1l^2}f=0$ and since $l^1,l^2\not\in\Span{x,z}$ also $l^1(v)\ne 0$, $l^2(v)\ne 0$, so that $\partial_{zl^1l^2}f=\partial_{l^1l^2}v^3$ is a nonzero multiple of~$v$. If $\partial_{l^1l^2}f$ were a square $w^2$, we would have either $\partial_{zl^1l^2}f=0$ (in the case $w\in\Span{v}$) or that $\partial_{zl^1l^2}f$ is a nonzero multiple of $w$ with $\Span{w}\ne\Span{v}$. This show that $\partial_{l^1l^2}f$ is not a square, and hence it suffices to set $l^0:=x$ and take care of choosing $\Span{l^1},\Span{l^2}$ outside $\Sigma$. It remains to exclude that for every $\Span{\overline{l^1}},\Span{\overline{l^2}}\in\Ps R^1$ with $\partial_{\overline{l^1}\overline{l^2}}h=0$, at least one of them coincides with $\Span{\overline{x}}$. We shall more generally exclude, for whatever $\Span{\overline{l'}}\in\Ps R^1$, that $\partial_{\overline{x}\overline{l'}}h=0$.

Let us suppose the contrary and let $l'\in S_1$ be a representative of $\overline{l'}$. We have
\[
\partial_{xl'}f=\lambda v^2
\]
for some scalar $\lambda$, because $\partial_x\left(\partial_{xl'}f\right)=\partial_{xl'}h=0$ and $\partial_z\left(\partial_{xl'}f\right)=\partial_{l'}\partial_{xz}f=0$. It must be $\lambda\ne 0$, for otherwise the apolar ideal of $f$ would contain the ideal $\left(z^2,zx,xl'\right)$ of a degree three zero-dimensional scheme, in contrast with the hypothesis $\crk f\ge 4$. We also have
\[
\partial_{zl'}f=\partial_{l'}v^3=\mu v^2
\]
for some scalar $\mu$. Hence, setting $l'':=\lambda z-\mu x$, we have
\[
\partial_{l'l''}f=0\;.
\]
Then the apolar ideal of $f$ in $S^\bullet$ contains the ideal $I:=\left(z^2,zx,l'l''\right)$. If $\Span{l''}\ne\Span{z}$ (i.e., $\mu\ne 0$), then $I$ is the ideal of a degree three zero-dimensional scheme, which is excluded because $\crk f\ge 4$. If $\Span{l''}=\Span{z}$, we have $\partial_{l'}v^3=\partial_{l'z}f=0$, hence $\Span{\overline{l'}}=\Span{\overline{x}}$. In this case we have $\partial_{x^3}f=0$, hence the apolar ideal of $f$ contains $I':=\left(z^2,zx,x^3\right)$. Then \cite[Lemma~2.3]{BB} predicts that $\Span{f}$ is in the span of the image through the Veronese map of some curvilinear zero-dimensional scheme supported on $\Span{v}$ and of degree less than $4$; this is excluded because $\crk f\ge 4$. This ends the proof, but we also mention that such a scheme may also be explicitly exhibited. To this end, let us choose $y\in\Ps S_1\setminus\Span{x,z}$ and consider the ideal $\left(z^2,zx,\lambda'zy-\mu'x^2\right)$, where $\lambda',\mu'$ are the scalars determined by the relations $\partial_{x^2}f=\lambda'v^2$, $\partial_{zy}f=\mu'v^2(\ne 0)$. When $\lambda'\ne 0$, this is the ideal of the mentioned curvilinear scheme. When $\lambda'=0$, the scheme is of degree three (which suffices for the purposes of the proof) but not curvilinear; one may detect in a similar way an apolar to $f$ linear generator of the ideal of a smaller scheme.\hfill $\square$
\end{proof}

Let us summarize the information given by the above results in the following proposition.

\begin{proposition}\label{Predecomp}
Assume $\dim S^1=3$, let $f\in S_4$ with $\crk f\ge 4$, and $\Sigma$ be a finite subset of $\Ps S^1$. Then there exist distinct $\Span{l^0},\Span{l^1},\Span{l^2}\in\Ps S^1\setminus\Sigma$ such that $\partial_{l^0l^1l^2}f=0$ and $\partial_{l^1l^2}f$ is not a square.
\end{proposition}
\begin{proof}
If $\partial_{xy}f\ne 0$ for all nonzero $x,y\in S^1$, the result follows from Proposition~\ref{Rette} and Lemma~\ref{Square}. If $\partial_zf$ is a cube for some nonzero $z\in S_1$, the result follows from Lemma~\ref{Cc}. When $\partial_{xy}=0$ for some nonzero $x,y\in S^1$ and $\partial_zf$ is not a cube for every nonzero $z\in S_1$, the result follows from Lemma~\ref{Cross} if $\Span{x}\ne\Span{y}$, and from Lemma~\ref{Sdoppia} (with $l':=x$) if $\Span{x}=\Span{y}$.\hfill $\square$
\end{proof}

Now we work out the special case $\crk f=3$, using similar techniques as in Example~\ref{Esempio}.

\begin{proposition}\label{Br3}
Let $f\in S_4$, with $\dim S_1=3$. If $\crk f=3$, then $\Ork f=7$.
\end{proposition}
\begin{proof}
Let $\nu:\Ps S_1\to\Ps S_4$ be the Veronese embedding and set $P:=\Span{f}\in\Ps S_4$. If $\crk f=3$, then there exists a degree three zero-dimensional subscheme $Z$ of $\Ps S_1$, such that $P$ is in the $\Span{\nu(Z)}$, and $P\not\in\Span{\nu\left(Z'\right)}$ for every subscheme $Z'\subsetneq Z$. By \cite[Lemma~2.3]{BB}, $Z$ is curvilinear.

We first check that $\Ork f \ge 7$. Assume that $a:=\Ork f\le 6$. Fix a closed subset $X\subsetneq\Ps S_1$. Since $Z$ is curvilinear, there are only finitely many lines $L\subset\Ps S_1$ such that $\deg (L\cap Z) \ge 2$. Increasing if necessary $X$, we may assume that $X$ contains the union of these lines. In particular, $X$ contains $Z_{red}$. Take a degree $a$ reduced subscheme $B$ of $\Ps S_1\setminus X$, with $P\in\Span{\nu(B)}$ and such that $P\notin\langle\nu\left(B'\right)\rangle$ for any $B'\subsetneq B$. Since $Z_{red}\subset X$ and $B\subset\Ps S_1\setminus X$, we have $B\cap Z =\emptyset$, and in particular $B\ne Z$. But $P\in\Span{\nu(B)}\cap\Span{\nu(Z)}$, hence $h^1(\mathcal {I}_{Z\cup B}(4))>0$. Since $\deg (Z\cup B)=3+a\le 9$, by \cite[Lemma 34]{BGI} there is a line $L\subset \mathbb{P}^2$ such that $\deg (L\cap (Z\cup B))\ge 6$. For any effective divisor $D\subset \mathbb {P}^2$ and any zero-dimensional scheme $W\subset\Ps S_1$ let $\mbox{Res}_D(W)$ denote the residual scheme of $W$ with respect to $D$. We have $\deg (W) =\deg (D\cap W) + \deg (\mbox{Res}_D(W))$. Since $\deg (L\cap (Z\cup B)) \ge 6$, we have $\deg (\mbox{Res}_L(Z\cup B)) \le 3$. Since $Z\cap B =\emptyset$, by \cite[Lemma 5.1]{BB3} we get $Z\cup B \subset L$. Since $X$ contains any line $L$ with $\deg (L\cap Z)\ge 2$ and $B\cap X =\emptyset$, we get a contradiction.

Now we check that $\Ork f \le 7$. Fix a closed set $X\subsetneq\Ps S_1$ and let $C\subset\Ps S_1$ be a general conic containing $Z$. Since
$C$ is general, $C\nsubseteq X$. Since $Z$ is curvilinear and not contained in a line (otherwise $P\in\Span{\nu\left(Z'\right)}$ for some $Z'\subsetneq Z$), $C$ is a smooth conic. It follows that $P$ has border rank $3$ with respect to the rational normal curve $\nu(C)$. By Remark~\ref{SOrg}, the open rank of $P$ with respect to $\nu(C)$ is $7$, and therefore there exists $E\subset C\setminus C\cap X$ such that $\sharp(E)=7$ and $P\in\langle\nu(E)\rangle$. Hence $\Ork f\le 7$.\hfill $\square$
\end{proof}

\begin{remark}\label{EO}\rm
If $f$ is as in Example~\ref{Esempio}, then $\partial_lf\ne 0$ for all nonzero $l\in S_1$ (that is, $f$ essentially depends on three variables). Suppose indeed the contrary, and note that in this case $\blen f\le 3$, hence $\crk f\le 3$. Proposition~\ref{Br3} excludes $\crk f=3$. If $\crk f\le 2$, then there is some zero-dimensional scheme $Z'\subset\Ps S_1$, of degree at most two, such that $\Span{f}$ is in the span of $\nu\left(Z'\right)$, with $\nu:\Ps S_1\to\Ps S_4$ being the Veronese embedding. Then $h^1\left(\mathcal{I}_{Z\cup Z'}(4)\right)>0$, with $Z$ as in Example~\ref{Esempio}. Since $\deg Z\cup Z'\le 6$, $Z\cup Z'$ must lie on a line. But, by construction, $Z$ is not on a line.  Hence $f$ essentially depends on three variables.
\end{remark}

With the following result we reach the goal of the present section.

\begin{proposition}\label{8}
Assume $\dim S_1=3$ and let $f\in S_4$. We have $\Ork(f)\le 8$.
\end{proposition}
\begin{proof}
Let $X\subsetneq\Ps S_1$ be a (proper) closed subset. Let $\Sigma$ be the (necessarily finite) set of all $\Span{l}\in\Ps S^1$ such that the line $l=0$ is contained in $X$.

When $\crk f\ge 4$, Propositions~\ref{Predecomp} and~\ref{Decomp} give $\Ork(f,X)\le 8$. When $\crk f=3$, Proposition~\ref{Br3} gives $\Ork(f,X)=7<8$. Finally, if $\crk f\le 2$, then there is a zero-dimensional scheme $Z\subset\Ps S_1$ of degree at most two, such that $\Span{f}$ is in the span of $\nu(Z)$, with $\nu:\Ps S_1\to\Ps S_4$ being the Veronese embedding. Hence we can find distinct $\Span{l^0},\Span{l^1}\in S^1$, such that none of the lines $l^0=0$, $l^1=0$ is contained in $X$, and $l^0l^1=0$ contains $Z$. Therefore $\partial_{l^0l^1}f=0$, and Proposition~\ref{Duerette} gives $\Ork(f,X)\le 8$.\hfill $\square$
\end{proof}

Proposition~\ref{8} and Remark~\ref{EO} (or Example~\ref{SE}) together show that the maximum open rank for quaternary forms in essentially three variables is actually eight: in notation of \cite[Def.~2]{J}, $\Ork(3,4)=8$.

\begin{remark}
Proposition~\ref{8} and \cite[Theorems~4 and~5]{J} allow to improve \cite[Corollary~6]{J}, giving
\begin{equation}\label{UB}
\binom{n+d-2}{d-1}-\binom{n+d-6}{d-3}-\binom{n+d-7}{d-4}
\end{equation}
as an upper bound on open rank, hence on rank, for every $n\ge 3$, $d\ge 4$. Though \eqref{UB} is the best bound on open rank that we know to date, of course it is likely very far from being sharp. As a bound on rank, it is the best only for $(n,d)=(4,4)$ (likely far from being sharp, as well).
\end{remark}

\end{document}